\documentclass[a4paper,12pt]{amsart}

\pdfoutput=1
\usepackage{setspace}
\usepackage[left=28mm,head=30mm,bottom=30mm,foot=20mm,right=28mm]{geometry}
\usepackage{amsmath,amsthm,amsfonts,amssymb,mathtools,mathrsfs,url,bm,enumitem,stackengine}
\usepackage{newtxtext}
\usepackage[varvw]{newtxmath}
\usepackage{graphicx}
\usepackage{tikz}
\usepackage{tikz,amsmath,amssymb}
 
\newcommand{\axesframe}{%
  \draw (0,0) rectangle (2,2);
  \draw[->] (0,0)--(2.2,0) node[right]{$\alpha$};
  \draw[->] (0,0)--(0,2.2) node[above]{$\beta$};
  \foreach \t/\l in {1/1, {5/3}/\frac53, 2/2} {
    \draw (\t,0)--(\t,-0.04) node[below,font=\scriptsize]{$\l$};
    \draw (0,\t)--(-0.04,\t) node[left,font=\scriptsize]{$\l$};
  }
}

\newcommand{\guides}{%
  \draw[dashed,gray] (1,0)--(1,2);
  \draw[dashed,gray] ({5/3},0)--({5/3},2);
  \draw[dashed,gray] (0,{5/3})--(2,{5/3});
  \draw[dashed,gray] (1,2)--(2,1.5);
}

\allowdisplaybreaks
\usepackage{hyperref}
\hypersetup{
	colorlinks=true,
	linkcolor=blue,
	citecolor=red,
}
\usepackage{appendix}

\mathtoolsset{showonlyrefs=true}
\theoremstyle{definition}

\newtheorem{remark}{Remark}[section]

\theoremstyle{plain}

\newtheorem{theorem}{Theorem}[section]

\newtheorem{lemma}{Lemma}[section]

\renewcommand{\phi}{\varphi}
\let\phi\varphi
\let\epsilon\varepsilon

\def\p{\partial}

\def\R{\mathbb R}

\def\curl{\operatorname{curl}}
\def\vu{ u }

\begin{document}

	\title[Liouville Theorems for Fractional Hall-MHD]{Liouville Theorems for the Stationary Fractional Hall-Magnetohydrodynamic Equations}

\author{Albert Ai}
\address{Beijing International Center for Mathematical Research, Peking University, No. 5 Yiheyuan Road Haidian District, Beijing 100871, People’s Republic of China}
\email{albertai@bicmr.pku.edu.cn}		
	
\author{Gast\'{o}n Vergara-Hermosilla}
\address{School of Mathematics, Harbin Institute of Technology, Xidazhi Street, Harbin 150001,
Heilongjiang, People’s Republic of China}
\email{gaston.v-h@outlook.com (corresponding author)}	

	\maketitle

\begin{abstract}
In this paper we prove Liouville-type theorems for the stationary fractional Hall-magnetohydrodynamic equations in $\mathbb R^3$. Smooth solutions vanishing at infinity, with $u\in\dot H^{\alpha/2}$ and $b\in\dot H^{\beta/2}$, are trivial for $1\le\alpha\le\frac53$, $1\le\beta<2$ and $\alpha+2\beta\le5$, with no additional integrability assumption. The key step is to estimate the Hall boundary term through the Caffarelli--Silvestre extension of $b$.
\end{abstract}

\section{Introduction and main results}

The Hall-magnetohydrodynamic (Hall-MHD) system describes the dynamics of an incompressible electrically conducting fluid when
the relative motion of ions and electrons becomes significant.
Originating in plasma physics \cite{AchesonHide73,Lighthill60}, the system is relevant to a variety of physical phenomena,
including magnetic reconnection and the dynamics of protostellar
disks \cite{BalbusTerquem01,Forbes91}. Its main difference from the classical magnetohydrodynamic (MHD) system is the presence
of the Hall term
\[
\curl\bigl((\curl b)\times b\bigr),
\]
which is quadratic in the magnetic field $b$ and involves two spatial derivatives. Consequently, this term has the same
differential order as the Laplacian, while its nonlinear structure introduces additional difficulties in the presence of fractional
dissipation. These features make the analysis of Hall-MHD
substantially more delicate than that of the classical MHD system.
We refer to \cite{CDL14,chae-schonbek} for related results on
the well-posedness and evolution of Hall-MHD equations.

In this paper, we address a Liouville-type problem for the stationary fractional Hall-MHD system in 
 $\mathbb R^3$. More precisely, we consider 
\begin{equation}\label{SHMHD}
\begin{cases}
(-\Delta)^{\alpha/2}u+(u\cdot\nabla)u+\nabla\Pi
=(b\cdot\nabla)b,\\[2mm]
(-\Delta)^{\beta/2}b+(u\cdot\nabla)b-(b\cdot\nabla)u
+\curl\bigl((\curl b)\times b\bigr)=0,\\[2mm]
\nabla\cdot u=0,\qquad \nabla\cdot b=0,
\end{cases}
\qquad 0<\alpha,\beta<2,
\end{equation}
subject to the decay conditions
\[
u(x) \to 0, 
\quad 
b(x) \to 0, 
\quad 
\text{as } 
|x|\to \infty
.
\]
Here, $u$ and $b$ denote the velocity and magnetic fields, respectively, while $\Pi$ is the total pressure. The fractional Laplacians $(-\Delta)^{\alpha/2}$ and $(-\Delta)^{\beta/2} $ are defined by the Fourier multipliers with symbols $|\xi|^\alpha$ and $|\xi|^\beta $, respectively. Throughout the paper, the coefficient of the Hall term is normalized to one, see Remark \ref{rem:coef} below. In the absence of the Hall term, system \eqref{SHMHD} reduces to the stationary fractional MHD system.

 Our goal is to identify conditions on the dissipation exponents and on
the velocity and magnetic fields under which every smooth
stationary solution vanishing at infinity must be trivial.
Liouville theorems of this kind are  closely related to the
possibility of ruling out nontrivial stationary states under suitable energy assumptions. Even for the stationary
Navier--Stokes equations in $\mathbb R^3$,  the question remains  open under the sole assumption of finite
 Dirichlet integral.
Known results therefore impose additional conditions on the solution. A classical example is the integrability 
assumptionn
$u\in L^{9/2}(\mathbb R^3)$ \cite{Galdi}, 
while other approaches
involve suitable assumptions on oscillation, head pressure, or
Morrey-space norms
\cite{chae14,CW,Kozonoetal,Seregin16,Sereginetwang,V2026}. Despite their different formulations, many of these arguments
rely on a common principle: testing the equations against localized versions of the solution yields 
energy estimates in
which the dissipation is balanced by terms supported in an annulus, and the desired conclusion follows once these terms
vanish as the annulu moves to infinity.

For the stationary Hall-MHD system with  classical Laplacian
dissipation, this strategy has been developed in
\cite{chaewolfMHD,chaekimwolf}, where  the Hall term is incorporated  into the energy estimates alongside the other nonlinear terms.
 Subsequent works have established  Liouville-type results under various assumptions on the behavior
of the velocity and magnetic fields   at infinity, including integrability and growth conditions in Lebesgue-type spaces
\cite{ChenLiWang22,ChoNeustupaYang24,Zhang25}, as well as conditions allowing the fields to approach a nonzero constant
state \cite{WangYang25}. A useful structural property of the Hall
term is the identity
\[
\bigl((\curl b)
\times b\bigr)\cdot\curl  
b=0
,
\]
which shows that it does not contribute 
directly to the
dissipation in the formal energy identity. Nevertheless, after
localization, the Hall term produces a
 boundary contribution
that must be estimated. The aforementioned results concern
the classical Laplacian, corresponding to $\alpha=\beta=2$
in the notation adopted below.

Replacing the Laplacian by 
fractional powers introduces a further difficulty. 
The operators $ ( -\Delta )^{\alpha/2}$ and
$(-\Delta )^{ \beta/2} $ are nonlocal, so the localization arguments used in the classical setting   cannot be transferred directly.
For the fractional Navier--Stokes equations,  several approaches have been developed to address this issue
\cite{EVH2026,J-me,LeeLee26,LiuTan26,LiuZuo,wx18,yang22,zeng}.
Among them, the Caffarelli--Silvestre extension    \cite{cs07}  is particularly useful, 
 it represents a fractional
Laplacian as a boundary operator associated with
 a local, weighted elliptic problem in the upper half-space.

For the fractional MHD system without the Hall term, the region
\[
\mathcal A
=
\Bigl\{
(\alpha,\beta)\in\mathbb R^2
\ \Big|\
1\le\alpha\le\tfrac53,\quad
0<
\beta<2,
\quad
3\le\alpha
+
2\beta\le5
\Bigr\}
\]
was identified in \cite{AV26} as a range of dissipation exponents for which the Liouville property holds under
finite-energy assumptions. The corresponding problem for
fractional Hall-MHD is less developed. To the best of our knowledge, the only previously available Liouville theorem for this system is due to \cite{WL26}. By combining the Caffarelli--Silvestre extension with a fractional Leibniz rule,
those authors proved triviality for
$1 \le \alpha <2$ and $\frac53 \le \beta <2$, under finite fractional energy and the additional assumption
$u,b \in L^p(\mathbb R^3) $ for some
$2\le p\le\frac92$.

These results lead to a natural question: can the finite-energy criteria established for fractional MHD in
\cite{AV26} be retained when the Hall term is present, without
imposing additional integrability assumptions? The main obstruction is the boundary contribution generated by the Hall
term. In the physical variables, this contribution involves a full spatial derivative of $b$, whereas the magnetic dissipation
controls only $\|(-\Delta)^{\beta/4}b\|_{L^2}$. Our approach resolves this mismatch by expressing the Hall contribution
through the Caffarelli--Silvestre extension of the magnetic field and estimating it in terms of the associated weighted
energy.
 The key point is that, when $ \beta \ge 1$, the resulting
estimate can be obtained using the boundedness of $ b$,  
which
follows from its smoothness and decay at infinity, together
with its finite fractional energy. 
Thus,  no additional integrability assumption on the fields is needed.

We show that, throughout the portion of the parameter region
$\mathcal A $ where $\beta \ge 1$, the Liouville property for fractional MHD extends to the fractional Hall-MHD system.
More precisely,  define
\[
\mathcal A_H
=
\Bigl\{
(\alpha,\beta):
1\le\alpha\le\tfrac53,\quad
1\le\beta<2,\quad
\alpha+2\beta\le5
\Bigr\}.
\]
Our main result is the following theorem.

\begin{theorem}\label{thm1}
Let $(u, b) $ be a smooth solution   of   \eqref{SHMHD} such that $u(x) , b(x) \to 0$ as $|x|\to\infty$ and $u\in\dot H^{\alpha/2}(\mathbb R^3),$ $b\in\dot H^{\beta/2}(\mathbb R^3)$, with $(\alpha,\beta) \in \mathcal A_H$. Then $ u \equiv0, \ b\equiv 0$.
\end{theorem}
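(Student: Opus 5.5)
We follow the localized energy strategy of \cite{AV26}, built on the Caffarelli--Silvestre extension \cite{cs07}. Let $U$ and $B$ be the extensions of $u$ and $b$ to $\mathbb R^4_+=\{(x,y):y>0\}$. They solve $\operatorname{div}(y^{1-\alpha}\nabla U)=0$ and $\operatorname{div}(y^{1-\beta}\nabla B)=0$, and satisfy $-c_\alpha\lim_{y\to0}y^{1-\alpha}\partial_yU=(-\Delta)^{\alpha/2}u$, and similarly for $B$. Take a cutoff $\psi_R(x,y)=\psi((x,y)/R)$ supported in the half-ball of radius $2R$ and equal to $1$ on the half-ball of radius $R$. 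We test the $u$-equation with $u\psi_R$ and the $b$-equation with $b\psi_R$, and rewrite the dissipative terms as bulk integrals of the extensions. This gives
\[
c_\alpha\!\int y^{1-\alpha}|\nabla U|^2\psi_R+c_\beta\!\int y^{1-\beta}|\nabla B|^2\psi_R
= -c_\alpha\!\int y^{1-\alpha}\nabla U\cdot U\nabla\psi_R-c_\beta\!\int y^{1-\beta}\nabla B\cdot B\nabla\psi_R+N_R+H_R.
\]
Here $N_R$ collects the MHD nonlinear and pressure terms on the annulus $\{x:R\le|x|\le2R\}$. The Hall contribution is
\[
H_R=\int ((\curl b)\times b)\cdot(\nabla\psi_R\times b)\,dx,
\]
since the $\curl b$ part vanishes by $((\curl b)\times b)\cdot\curl b=0$. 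The cross terms $(b\cdot\nabla)b\cdot u$ and $(b\cdot\nabla)u\cdot b$ cancel up to annulus terms, exactly as in MHD. Because the weighted energies are finite (they equal $\|u\|_{\dot H^{\alpha/2}}^2$ and $\|b\|_{\dot H^{\beta/2}}^2$), it suffices to show that every right-hand term tends to $0$ as $R\to\infty$. Then $\nabla U=\nabla B=0$, so $u$ and $b$ are constant, and the decay at infinity forces $u\equiv b\equiv0$.

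For the linear boundary terms and $N_R$ (including the pressure, handled through $\Pi$ bounds from the Riesz transform, $\Pi=R_iR_j(b_ib_j-u_iu_j)$), we invoke the estimates of \cite{AV26} verbatim. These are exactly what require $1\le\alpha\le\frac53$, $\alpha+2\beta\le5$ and the lower bound $\alpha+2\beta\ge3$, which holds automatically here since $\alpha,\beta\ge1$. They rely on Sobolev embeddings $u\in L^{6/(3-\alpha)}$ and $b\in L^{6/(3-\beta)}$, and on the tail-vanishing of the weighted energy outside balls of radius $R$.

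The main obstacle is $H_R$. It contains a full derivative of $b$, while we control only $\beta/2\le1$ derivatives, and the spatial Hölder count alone fails. The plan is to lift $H_R$ into the extension. Write the integrand as $\partial_j b_k\,G_{jk}(b,b,\nabla\psi_R)$, where $G$ is quadratic in $b$. Using $B(\cdot,0)=b$, express $H_R$ as
\[
-\int_{\mathbb R^4_+}\partial_y\bigl[\partial_jB_k\,G_{jk}(B,B,\nabla\psi_R)\bigr]\,dx\,dy.
\]
Integrating by parts in $x_j$ turns this into a sum of terms of the form $\int \partial_yB\cdot\nabla_x B\, B\,\nabla\psi_R$ and $\int \partial_yB\, B\, B\,\nabla^2\psi_R$. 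The first type is crucially not $\partial_j\partial_yB$, which the antisymmetry of the cross product permits. We insert weights $y^{(1-\beta)/2}\cdot y^{(1-\beta)/2}\cdot y^{\beta-1}$. Since $\beta\ge1$, we have $y^{\beta-1}\lesssim R^{\beta-1}$ on the support, and the maximum principle for the extension gives $\|B\|_{L^\infty}\le\|b\|_{L^\infty}<\infty$, which is the boundedness mentioned in the introduction. This yields
\[
|H_R|\lesssim \|b\|_{L^\infty}\,R^{\beta-1}R^{-1}\int_{\mathcal R_R}y^{1-\beta}|\nabla B|^2,
\]
where $\mathcal R_R$ is the support of $\nabla\psi_R$. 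The powers of $R$ combine to $R^{\beta-2}\le1$, and the weighted energy tail tends to $0$, so $H_R\to0$. The $\nabla^2\psi_R$ terms are handled the same way, with Cauchy--Schwarz against $\int y^{1-\beta}|B|^2|\nabla^2\psi_R|^2\cdot y^{2\beta-2}$. That factor is bounded using $B\in L^{6/(3-\beta)}$ uniformly in $y$, which gives a decaying power of $R$. The delicate points are arranging the integration by parts so that no second derivatives of $B$ appear, and checking that the $y$-extent $2R$ of the cutoff costs no more than $R^{\beta-1}$.
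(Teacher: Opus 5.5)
Your proposal is correct and follows essentially the same route as the paper. Both use localized energy estimates via the Caffarelli--Silvestre extension, and both lift the Hall flux $H_R$ into $\mathbb R^4_+$ by the fundamental theorem of calculus in $y$. Both remove the mixed derivative $\curl_x\partial_y B$ by integrating by parts in $x$. The $|\bar\nabla B|^2|B|$ term is controlled by $y^{\beta-1}\le(2R)^{\beta-1}$ (this is where $\beta\ge1$ enters) together with $\|B\|_{L^\infty}\le\|b\|_{L^\infty}$. The remaining terms are handled by Cauchy--Schwarz against $y^{\beta-1}|B|^2$, using the uniform-in-$y$ $L^{6/(3-\beta)}$ bound, and everything comes out as $O(R^{\beta-2})$.

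The differences are minor and fixable:
\begin{enumerate}
\item The paper uses the product cutoff $\psi_R(x)\chi_R(y)$. With your radial cutoff in $(x,y)$, you must also list the term where $\partial_y$ falls on $\nabla_x\psi_R$. This is the paper's $J_1$, and it is bounded exactly like your $\nabla^2\psi_R$ terms.
\item The lift needs $\nabla_x B(\cdot,y)\to\nabla b$ locally uniformly as $y\to0$. The paper proves this in a separate lemma for smooth bounded $b$.
\end{enumerate}
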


We now discuss several aspects of Theorem~\ref{thm1}, with emphasis on the range of admissible parameters and the influence of the Hall term.

\begin{remark}\label{rem:range}
The region $\mathcal A_H$ can be viewed as the part of the
admissible region for fractional MHD in \cite{AV26} that lies
above the threshold $\beta=1$; that is,
\[
\mathcal A_H=\mathcal A\cap\{\beta\ge1\}.
\]
In particular, the parameter range obtained here contains the ranges covered by the fractional MHD results of Zeng
\cite{zeng} and Lee--Lee \cite{LeeLee26}, namely
\[
1\le\alpha,\beta\le\frac53
\]
and
\[
1\le\alpha<\frac53,\qquad
\frac53<\beta<2,\qquad
\alpha+2\beta<5,
\]
respectively. Hence, in these regimes, the finite-energy Liouville criteria remain valid after the Hall term is
introduced, without requiring additional integrability assumptions. Figure~\ref{fig:AH} illustrates the region
$\mathcal A_H$.

\begin{figure}[htbp]
\centering
\begin{tikzpicture}[scale=2.4]
\fill[blue!35] (1,1)--({5/3},1)--({5/3},{5/3})--(1,2)--cycle;
\axesframe
\guides
\draw (1,1)--({5/3},1)--({5/3},{5/3})--(1,2);
\draw (1,1)--(1,2);
\draw[fill=white] (1,2) circle (0.025);
\node at (1.36,1.45) {$\mathcal A_H$};
\end{tikzpicture}
\caption{The region $\mathcal A_H $ in the window $ (0 ,2)^2 $. The point $ (1,2)$ is excluded because $\beta < 2$. The gray dashed lines are the reference lines $\alpha =1 $, $\alpha=\frac53$, $\beta = \frac53$,   and $\alpha+2\beta=5$.}
\label{fig:AH}
\end{figure}
\end{remark}

\begin{remark}\label{rem:Hall}
The restriction $\beta\ge1$ arises from the estimate of the Hall contribution rather than from the energy estimate for the
remaining terms. Indeed, testing the magnetic equation with  $\psi_R b $, where $\psi_R$ is a smooth cut-off function, 
and using
\[
\bigl((\curl b)\times b\bigr)\cdot\curl b
=
0
,
\]
shows that the Hall term contributes only through the boundary
flux
\[
H_R
=
\int_{\mathbb R^3}
\Bigl[
(\curl b\cdot\nabla\psi_R)|b|^2
-
(\curl b\cdot b)(b\cdot\nabla\psi_R)
\Bigr]\,dx,
\]
as established in Lemma~\ref{lemma.Hall.id}. The difficulty is that this expression contains a full derivative of $b$ in
the physical variables, whereas the available dissipation controls only the fractional derivative of order $\beta/2$.
To overcome this difficulty, we express $H_R$ in terms of the
Caffarelli--Silvestre extension $ b^* $ and estimate it using
the weighted energy 
\[
\int_{\mathbb R^4_+}
y^{1-\beta}|\bar\nabla b^*|^2\,
dx\,dy
.
\]
When $\beta\ge1$, the exponent $1-\beta$ is nonpositive. Consequently, on the support of the cut-off in the extended
variable, the unweighted gradient can be controlled by the
weighted energy, up to a factor depending on $R$. Together with the boundedness of $b$, this yields the decay
of $H_R$ as $R\to\infty$. This estimate is the essential ingredient that allows the energy argument to accommodate
the Hall term under the assumptions of Theorem~\ref{thm1}.
\end{remark}

\begin{remark}\label{rem:beta<1}
The argument above does not cover the part of the MHD parameter region $\mathcal A$ where $\beta<1$. In this
regime, the weight $y^{1-\beta}$ has a positive exponent and degenerates as $y\downarrow0$. As a result, the weighted
energy no longer provides the control of the unweighted quantity $|\bar\nabla b^*|^2$ near the boundary
$\{y=0\}$ that is needed in the estimate of the Hall flux. More specifically, the term $J_2$ in the proof of
Lemma~\ref{lemma.Hall.decay} cannot be controlled by the available weighted energy using our argument. Whether the
Liouville property remains valid in this region under the finite-energy assumptions alone is an open question.
\end{remark}

\begin{remark}\label{rem:coef}
The normalization of the Hall coefficient to one in \eqref{SHMHD} does not restrict the result. Indeed, if the
Hall term is multiplied by a constant $\kappa\in\mathbb R$, the same proof applies, since $\kappa$ enters the energy
identity only as a multiplicative factor in $H_R$. For $\kappa=0$, the system reduces to the fractional MHD system,
and the corresponding Liouville property follows from
\cite{AV26}.
\end{remark}

\begin{remark}\label{rem:NS}
The theorem also recovers the corresponding Liouville result for the stationary fractional Navier--Stokes equations.
Indeed, if $b\equiv0$, the magnetic equation is satisfied identically and the momentum equation reduces to the
stationary fractional Navier--Stokes system. In this case, the range $1\le\alpha\le\frac53$ from Corollary~1.2 of
\cite{AV26} is recovered, in agreement with \cite{zeng} and \cite{LeeLee26}.
\end{remark}

The rest of the paper is organized as follows.
In Section~\ref{sect:prel}, we recall the
Caffarelli--Silvestre extension and establish properties of
the extensions of bounded smooth functions that are needed
to handle the Hall contribution. Section~\ref{proofs} is
devoted to the proof of Theorem~\ref{thm1}. \\

Given $R>1$, in what follows, and throughout this paper, we use the notation
\[
\mathcal{C}_R := \{ x \in \mathbb{R}^3 \ : \ R < |x| < 2R \}.
\]
We let $\psi\in \mathcal{C}^\infty_c(\mathbb{R}^3)$ be a smooth radial non-increasing cut-off
function such that $0\le  \psi\le  1$, $\psi(x)=1$ for $|x|\le  1$, and
$\psi(x)=0$ for $|x|\ge 2$. Given $R>1$, set
$  \psi_R(x)=\psi\!\left(\frac{x}{R}\right)$. On the other hand,  we  consider as well  a non-increasing function
$\chi\in \mathcal{C}^\infty([0,\infty))$ such that $\chi(y)=1$ for
$y\leq 1$ and $\chi(y)=0$ for $y\geq 2$. Given $R>1$, we define
$\chi_R(y) = \chi\!\left(\frac{y}{R}\right)$.  

\section{Preliminaries}\label{sect:prel}

The nonlocal character of the fractional operators $(-\Delta)^{\alpha/2}$ and $(-\Delta)^{\beta/2}$ prevents the direct application of several standard techniques employed in Liouville-type results for the stationary Navier--Stokes and MHD systems.  
To  overcome this difficulty, we adopt the Caffarelli--Silvestre extension framework, which realizes a  fractional Laplacian in $\mathbb{R}^{3}$ through a local elliptic problem in the upper half-space $\mathbb{R}^{4}_{+}$, governed by the extended Laplacian
 $ \bar{\Delta}$. Throughout the paper, the symbols $\bar{\nabla}$, $\overline{\operatorname{div}}$, and $\bar{\Delta}$ denote the gradient, divergence, and Laplacian, respectively, with respect to the extended variables. Because the two equations
 in \eqref{SHMHD} contain fractional Laplacians of possibly different orders, separate extensions must be introduced for $u$ and $b$, each associated with its corresponding weight. We therefore formulate the following result for an arbitrary exponent $\sigma$, so that it can be applied to both extensions.

\begin{theorem}\cite{cs07}\label{thm:CS}
Given $\sigma \in (0,2)$,
consider $v \in \dot{H}^{\frac \sigma 2}(\mathbb{R}^3)$, $\mu = 1 - \sigma$,
and the weighted homogeneous energy space
$\dot H^1(\mathbb R^4_+,y^\mu)$, obtained as the completion of
$C_c^\infty(\overline{\mathbb{R}^4_+})$ with respect to the seminorm
\[
\|w\|_{\dot H^1(\mathbb R^4_+,y^\mu)}
:=
\left(
\int_{\mathbb R^4_+}
y^\mu |\bar\nabla w|^2\, d  x\, d  y
\right)^{1/2}.
\]
 Then $v$ admits a unique extension $v^* \in \dot H^1(\mathbb{R}^4_+, y^\mu)$ satisfying
\[
\bar{\Delta}_\mu v^*(x, y) := \bar{\Delta} v^* + \frac{\mu}{y} \partial_y v^* = \frac{1}{y^\mu} \overline{\operatorname{div}}(y^\mu \bar{\nabla} v^*) = 0  ,
\]
and
\[
v^*(x, 0) = v(x).
\]
In addition, there exists $C_\sigma>0$, depending only on $\sigma$,  such that:
\begin{enumerate}
    \item the operator $(-\Delta)^{\frac \sigma 2}  v$ is given by
    \[
    (-\Delta)^{\frac \sigma 2}  v(x) = - \lim_{y \to 0} C_\sigma ( y^\mu \partial_y v^*(x, y)),
    \]
    \item and, the energy equality
    \[
    \int_{\mathbb{R}^3} |(-\Delta)^{\frac \sigma 4}  v|^2 dx  = C_\sigma \int_{\mathbb{R}^4_+} y^\mu |\bar{\nabla} v^*|^2 dx dy.
    \]
\end{enumerate}
Moreover, $v^*(\cdot,y)=P^{(\sigma)}_y*v$, where $P_y^{(\sigma)}(x)=c_{3,\sigma}\,y^{\sigma}(|x|^2+y^2)^{-\frac{3+\sigma}{2}}$ is the fractional Poisson kernel and $c_{3,\sigma}>0$ is chosen so that $\|P_y^{(\sigma)}\|_{L^1(\mathbb R^3)}=1$.
\end{theorem}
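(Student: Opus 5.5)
The statement is the Caffarelli--Silvestre extension theorem \cite{cs07}, quoted with three properties: the boundary formula for $(-\Delta)^{\sigma/2}v$, the energy identity, and the Poisson-kernel representation. The plan is to build $v^*$ explicitly, read off those properties on the Fourier side, and then get uniqueness from the weighted space.

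The first step is the extension equation. Take the partial Fourier transform in $x$. The equation $\bar\Delta_\mu v^*=0$ becomes the ODE
\[
\partial_y^2\hat v^*+\frac{\mu}{y}\partial_y\hat v^*-|\xi|^2\hat v^*=0 .
\]
We look for solutions $\hat v^*(\xi,y)=\hat v(\xi)\,\phi(|\xi|y)$, where $\phi$ solves $\phi''+\frac{\mu}{t}\phi'-\phi=0$ with $\phi(0)=1$ and $\phi$ decaying as $t\to\infty$. With $s=\sigma/2$ and $\mu=1-2s$, one can take
\[
\phi(t)=c_s\,t^{s}K_{s}(t),
\]
a modified Bessel function of the second kind, normalized so that $\phi(0)=1$.

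Next come the two quantitative properties. The small-$t$ expansion of $t^sK_s(t)$ gives
\[
t^{\mu}\phi'(t)\to -d_s\neq0 \quad\text{as } t\to0 .
\]
Hence $y^\mu\partial_y\hat v^*\to -d_s|\xi|^{2s}\hat v=-d_s|\xi|^{\sigma}\hat v$, which is property (1) with $C_\sigma=1/d_s$. For the energy identity, use Plancherel in $x$ and the scaling $t=|\xi|y$:
\[
\int_{\mathbb R^4_+} y^\mu|\bar\nabla v^*|^2\,dx\,dy=\int_{\mathbb R^3}|\hat v(\xi)|^2|\xi|^{2-1-\mu}\,d\xi\int_0^\infty t^\mu\bigl(\phi^2+\phi'^2\bigr)\,dt .
\]
Since $2-1-\mu=\sigma$, this is a constant times $\|(-\Delta)^{\sigma/4}v\|_{L^2}^2$. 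The $t$-integral is finite because $\phi$ decays exponentially at infinity and $t^\mu\phi'^2\sim t^{-\mu}$ near $0$ with $-\mu>-1$. Integrating by parts with the ODE shows this integral equals $d_s$, so the constants in (1) and (2) agree.

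The kernel representation comes from inverting $\phi(|\xi|y)$ in $\xi$. The inverse Fourier transform is $y^{-3}$ times a radial function of $x/y$. Alternatively, check directly that
\[
P_y^{(\sigma)}(x)=c\,y^{\sigma}(|x|^2+y^2)^{-(3+\sigma)/2}
\]
solves $\overline{\operatorname{div}}(y^\mu\bar\nabla P)=0$, which is a routine radial computation. It has unit mass, by the scaling $x=yz$, and it tends to $\delta_0$ as $y\to0$. Its Fourier transform is then forced to equal $\phi(|\xi|y)$ by uniqueness of the bounded decaying ODE solution.

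Uniqueness in $\dot H^1(\mathbb R^4_+,y^\mu)$ holds because the difference $w$ of two extensions has zero trace and solves the weighted equation. Testing against $w$ is justified, since $y^\mu$ is an $A_2$ weight for $\mu\in(-1,1)$ and the trace on $\{y=0\}$ makes sense in this space. This gives $\int y^\mu|\bar\nabla w|^2=0$, so $w$ is constant, and the constant is $0$ by the zero trace.

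The main obstacle is the trace and uniqueness theory in the weighted homogeneous space, together with identifying the minimizer there. I would handle it by the Dirichlet principle: the extension minimizes the weighted energy among functions with trace $v$, which follows from the Fourier computation applied to each fixed $\xi$ and the corresponding one-dimensional minimization.
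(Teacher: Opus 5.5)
The paper gives no proof of this statement; it is quoted from \cite{cs07}. Your Fourier/Bessel reconstruction is essentially the Caffarelli--Silvestre argument, and the computations are correct: you get $C_\sigma=1/d_s$ with $d_s=2^{1-\sigma}\Gamma(1-\frac\sigma2)/\Gamma(\frac\sigma2)$, the ODE integration by parts shows that the constants in (1) and (2) coincide, and the kernel is identified through boundedness plus $\hat P_y(\xi)\to1$. The points you leave at sketch level are functional-analytic: that $v^*$ lies in the completion, and that test functions vanishing at $y=0$ are dense in the zero-trace subspace, which the uniqueness step needs. These are standard, for instance via the one-dimensional Hardy inequality $\int_0^\infty y^{\mu-2}|w|^2\,dy\le\frac{4}{(1-\mu)^2}\int_0^\infty y^{\mu}|\partial_yw|^2\,dy$ for $\mu<1$, rather than the $A_2$ remark alone.
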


In what follows we write
\[
\mu:=1-\alpha, \qquad \nu:=1-\beta, \qquad p:=\frac{6}{3-\alpha}, \qquad q:=\frac{6}{3-\beta},
\]
so that, by the Sobolev embedding, $u\in L^{p}(\R^3)$ and $b\in L^{q}(\R^3)$. We denote by $u^*$ and $b^*$ the Caffarelli--Silvestre extensions of $u$ and $b$, associated with the weights $y^{\mu}$ and $y^{\nu}$ respectively, and by $C_\alpha$, $C_\beta$ the corresponding constants given by Theorem \ref{thm:CS}. Note that $\mu,\nu>-1$ because $\alpha,\beta<2$, and that $\nu\le0$ if and only if $\beta\ge1$.

\section{Proof of Theorem \ref{thm1}}\label{proofs}

We begin with the lemmas that will be used in the proof of Theorem \ref{thm1}. 
The first one is stated for a generic exponent, so that it can be applied both to $u$ and to $b$. 
Its proof is analogous to that of \cite{EVH2026}, and is  omitted for brevity.
The proofs of all the subsequent lemmas are postponed to the end of this section.

\begin{lemma}\label{lemma.with.E}
Let $\sigma\in(0,2)$, $v\in \dot H^{\sigma/2}(\mathbb R^3)$ and let $v^*$ be its Caffarelli--Silvestre extension. For every $R>1$ let $E_R\subseteq B_{2R}$ be a measurable set. Then
\[
\frac1{R^2}\int_0^{2R}\int_{E_R} y^{1-\sigma}|v^*|^2 dxdy\xrightarrow[R\to\infty]{}0
.
\]
\end{lemma}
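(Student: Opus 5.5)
Since $E_R\subseteq B_{2R}$ and the integrand is nonnegative, I will first replace $E_R$ by $B_{2R}$; it then suffices to prove
\[
\frac1{R^2}\int_0^{2R}\int_{B_{2R}} y^{1-\sigma}|v^*|^2\,dx\,dy\to0 .
\]
The strategy is to use the Poisson representation $v^*(\cdot,y)=P^{(\sigma)}_y*v$ from Theorem~\ref{thm:CS} together with the Sobolev embedding $v\in L^{r}(\mathbb R^3)$, $r=\frac{6}{3-\sigma}$. Since $\|P^{(\sigma)}_y\|_{L^1}=1$, Young's inequality gives $\|v^*(\cdot,y)\|_{L^r}\le\|v\|_{L^r}$ uniformly in $y$. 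A direct Hölder bound on $B_{2R}$ then yields
\[
\int_{B_{2R}}|v^*|^2dx\le CR^{3(1-2/r)}\|v\|_{L^r}^2=CR^{\sigma}\|v\|_{L^r}^2 .
\]
Combined with $\int_0^{2R}y^{1-\sigma}dy\sim R^{2-\sigma}$ (finite because $1-\sigma>-1$), this gives a bound of order $R^{-2}\cdot R^{2-\sigma}\cdot R^{\sigma}=O(1)$. So the scaling is exactly critical, and the real content is upgrading $O(1)$ to $o(1)$.

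To get the little-$o$, I will split $v=v_1+v_2$ with $v_1\in C_c^\infty$ and $\|v_2\|_{L^r}<\epsilon$, which is possible by density since $r<\infty$. Extension is linear, so $v^*=v_1^*+v_2^*$. The critical bound above, applied to $v_2^*$, gives a contribution at most $C\epsilon^2$ uniformly in $R$. For $v_1$, I will instead use $\|v_1^*(\cdot,y)\|_{L^2}\le\|v_1\|_{L^2}<\infty$. This gives
\[
\frac1{R^2}\int_0^{2R}y^{1-\sigma}\|v_1\|_{L^2}^2\,dy\lesssim R^{-\sigma}\|v_1\|_{L^2}^2\to0 .
\]
Using $|a+b|^2\le2|a|^2+2|b|^2$, I get $\limsup_{R\to\infty}\le C\epsilon^2$, and letting $\epsilon\to0$ concludes.

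The main point to check is that the Poisson representation applies to $v_2=v-v_1$, which only lies in $\dot H^{\sigma/2}$. This is fine because the representation in Theorem~\ref{thm:CS} holds for all $\dot H^{\sigma/2}$ functions, and $v_1\in C_c^\infty\subset\dot H^{\sigma/2}$. One subtlety is that the density approximation above is in $L^r$, not in $\dot H^{\sigma/2}$. I will therefore choose $v_1$ approximating $v$ in $\dot H^{\sigma/2}$, using density of $C_c^\infty$ in $\dot H^{\sigma/2}$ for $\sigma<3$, and then get $\|v_2\|_{L^r}\lesssim\|v_2\|_{\dot H^{\sigma/2}}<\epsilon$ by Sobolev. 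This keeps everything consistent with the extension theory, and the uniform $L^r$ bound on $v_2^*(\cdot,y)$ from Young's inequality is the key estimate.
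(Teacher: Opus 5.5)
Your proof is correct and complete. The paper does not prove this lemma in the text; it defers to \cite{EVH2026}. So there is no in-paper argument to compare with line by line.

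Your critical bound is exactly the estimate the paper uses later in \eqref{eq:aux}: Young's inequality with $\|P^{(\sigma)}_y\|_{L^1}=1$, then H\"older on a set of measure $\sim R^3$ with exponent $1-\frac2r=\frac\sigma3$. You correctly observe that, combined with $\int_0^{2R}y^{1-\sigma}\,dy\sim R^{2-\sigma}$, this only gives $O(1)$. The splitting into a small $L^r$ part and an $L^2$ part is therefore the genuine extra ingredient that produces $o(1)$. The $L^2$ part decays like $R^{-\sigma}$ because $\sigma>0$.

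One simplification is available. The subtlety you raise at the end is not really an obstacle. The formula $v^*(\cdot,y)=P^{(\sigma)}_y*v$ is a linear convolution, and Young's inequality holds for every $L^r$ function. You can therefore split directly in $L^r$, without asking whether $v_2$ lies in $\dot H^{\sigma/2}$. For instance, take $v_1=v\mathbf 1_{B_\rho}$, which lies in $L^2\cap L^r$ by H\"older since $r>2$. Take $v_2=v\mathbf 1_{\R^3\setminus B_\rho}$, whose $L^r$ norm tends to $0$ as $\rho\to\infty$. Your route via density of $C_c^\infty$ in $\dot H^{\sigma/2}$ (valid since $\sigma/2<3/2$), followed by Sobolev, is equally valid.
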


\begin{lemma}\label{lemma I R}
Let $1\le\alpha\le\frac53$ and $3\le\alpha+2\beta\le5$, $u\in\dot H^{\alpha/2}(\R^3)$, $b\in\dot H^{\beta/2}(\R^3)$. Let $\Pi$ be the pressure defined in \eqref{eq:pressure} below.
 Then
\[
\lim_{R\to +\infty}
\int_{ C_R }\bigl(|\vu|^2+|b|^2\bigr)
| \vu |
| \nabla  \psi_R |
 dx
+
 \int_{ C_R }
 | \Pi  |
 | \vu  |
|  \nabla  \psi_R |
  dx
 = 0.
\]
\end{lemma}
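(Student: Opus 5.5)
The plan is to bound each term by Hölder's inequality on the annulus $\mathcal C_R$, using $|\nabla\psi_R|\le C/R$ and $|\mathcal C_R|\sim R^3$. The integrability comes from the Sobolev embeddings $u\in L^p$, $b\in L^q$ with $p=\frac{6}{3-\alpha}$, $q=\frac{6}{3-\beta}$. The conclusion will follow because the resulting power of $R$ is nonpositive exactly in the stated parameter range, while the tail norms $\|u\|_{L^p(\mathcal C_R)}$, $\|b\|_{L^q(\mathcal C_R)}$ tend to $0$ as $R\to\infty$ by dominated convergence.

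\emph{Step 1 (the $|u|^3$ term).} By Hölder with exponents $p/3$ and its conjugate,
\[
\frac1R\int_{\mathcal C_R}|u|^3\,dx\le \frac CR\,\|u\|_{L^p(\mathcal C_R)}^3\,|\mathcal C_R|^{1-\frac3p}\le C R^{\frac{3\alpha-5}{2}}\|u\|_{L^p(\mathcal C_R)}^3,
\]
since $1-\frac3p=\frac{\alpha-1}{2}\ge0$ (this uses $\alpha\ge1$). The exponent is $\le0$ because $\alpha\le\frac53$, so the term tends to $0$.

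\emph{Step 2 (the $|b|^2|u|$ term).} With exponents $q/2$, $p$ and a third exponent for $|\mathcal C_R|$, the leftover power is
\[
1-\frac2q-\frac1p=\frac{\alpha+2\beta-3}{6}.
\]
This is nonnegative precisely when $\alpha+2\beta\ge3$, so the three-factor Hölder inequality is legitimate. The total power of $R$ is $-1+\frac{\alpha+2\beta-3}{2}=\frac{\alpha+2\beta-5}{2}\le0$. Hence the term is bounded by $C\|b\|_{L^q(\mathcal C_R)}^2\|u\|_{L^p(\mathcal C_R)}\to0$.

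\emph{Step 3 (pressure).} Taking divergence of the momentum equation, I expect \eqref{eq:pressure} to be $\Pi=\mathcal R_i\mathcal R_j(u_iu_j)-\mathcal R_i\mathcal R_j(b_ib_j)=:\Pi_1+\Pi_2$, where the $\mathcal R_i$ are Riesz transforms. By Calderón--Zygmund, $\Pi_1\in L^{p/2}$ and $\Pi_2\in L^{q/2}$; note $p/2=\frac{3}{3-\alpha}>1$ and $q/2=\frac{3}{3-\beta}>1$. Then $|\Pi_1||u|$ is handled exactly like $|u|^3$ in Step 1, with $\|\Pi_1\|_{L^{p/2}(\mathcal C_R)}\|u\|_{L^p(\mathcal C_R)}$ replacing $\|u\|^3_{L^p(\mathcal C_R)}$. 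Likewise $|\Pi_2||u|$ is handled like $|b|^2|u|$ in Step 2, using $\|\Pi_2\|_{L^{q/2}(\mathcal C_R)}$. Both tail norms vanish as $R\to\infty$ since $\Pi_1,\Pi_2$ are globally integrable in those spaces.

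The only delicate point is the endpoint cases $\alpha=\frac53$ or $\alpha+2\beta=5$, where the power of $R$ is exactly $0$. There the decay must come entirely from the tails, and this is why we estimate by local norms on $\mathcal C_R$ rather than global ones. The other thing to check is that the pressure in \eqref{eq:pressure} is indeed this Riesz-transform representative. If it differs by a constant or polynomial, decay at infinity of $u$ and $b$ forces agreement.
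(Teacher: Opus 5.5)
Your proposal is correct and follows the paper's proof essentially verbatim. You apply H\"older on $\mathcal C_R$ with the exponent triples $(p,p,p)$, $(p,q,q)$, $(p/2,p,\infty)$ and $(q/2,p,\infty)$, which gives the powers $R^{(3\alpha-5)/2}$ and $R^{(\alpha+2\beta-5)/2}$, and then let the tail norms on $\mathcal C_R$ tend to zero. Your guess for \eqref{eq:pressure} is exactly the paper's Riesz-transform definition, so the closing hedge is unnecessary. Note also that the paper's uniqueness of that representative rests on $\Pi\in L^{p/2}+L^{q/2}$ and the mean value property for harmonic functions, not on decay of $u$ and $b$.
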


\begin{lemma}\label{lemma.Hall.id}
Let $b\in C^\infty(\mathbb R^3)$ and $R>1$. Then
\begin{multline*}
\int_{\mathbb R^3}\curl\bigl((\curl b)\times b\bigr)\cdot b\,\psi_R\,dx
\\
=
\int_{\mathbb R^3}\Bigl[(\curl b\cdot\nabla\psi_R)\,|b|^2-(\curl b\cdot b)\,(b\cdot\nabla\psi_R)\Bigr]dx=:H_R .
\end{multline*}
\end{lemma}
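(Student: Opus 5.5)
The identity follows from a single integration by parts and the pointwise vector identity $\bigl((\curl b)\times b\bigr)\cdot\curl b=0$. Write $J:=\curl b$ and $F:=J\times b$. Since $\psi_R$ is smooth and compactly supported and $b$ is smooth, all integrals are over a bounded set and no boundary terms arise. The first step is the product rule
\[
\operatorname{div}(F\times(\psi_R b))=(\curl F)\cdot(\psi_R b)-F\cdot\curl(\psi_R b),
\]
which, after integration over $\mathbb R^3$, gives
\[
\int_{\mathbb R^3}\curl F\cdot b\,\psi_R\,dx=\int_{\mathbb R^3}F\cdot\curl(\psi_R b)\,dx .
\]

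Next I expand $\curl(\psi_R b)=\psi_R\,\curl b+\nabla\psi_R\times b$. The first contribution vanishes pointwise, because $F\cdot J=(J\times b)\cdot J=0$. So we are left with
\[
\int_{\mathbb R^3}(J\times b)\cdot(\nabla\psi_R\times b)\,dx .
\]

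The last step applies the Binet--Cauchy identity $(a\times c)\cdot(d\times e)=(a\cdot d)(c\cdot e)-(a\cdot e)(c\cdot d)$ with $a=J$, $c=b$, $d=\nabla\psi_R$, $e=b$. This yields
\[
(J\cdot\nabla\psi_R)|b|^2-(J\cdot b)(b\cdot\nabla\psi_R),
\]
which is exactly the integrand defining $H_R$.

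The only delicate points are bookkeeping: the sign in the divergence-of-cross-product identity and the order of the factors in Binet--Cauchy. I would verify both by direct index computation using $\epsilon_{ijk}\epsilon_{ilm}=\delta_{jl}\delta_{km}-\delta_{jm}\delta_{kl}$.
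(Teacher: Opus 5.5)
Your proposal is correct and follows essentially the same route as the paper's proof. Both integrate $\operatorname{div}(F\times G)=G\cdot\curl F-F\cdot\curl G$ with $G=\psi_R b$, expand $\curl(\psi_R b)=\psi_R\curl b+\nabla\psi_R\times b$ using the pointwise cancellation $\bigl((\curl b)\times b\bigr)\cdot\curl b=0$, and apply the Binet--Cauchy identity to recover the integrand of $H_R$.
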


\begin{lemma}\label{lemma.Hall.decay}
Let $1\le\beta<2$ and let $b\in C^\infty(\R^3)\cap L^\infty(\R^3)$ with $b\in\dot H^{\beta/2}(\R^3)$. Set
\[
M:=\|b\|_{L^\infty(\R^3)},\qquad E:=\int_{\mathbb R^4_+}y^{1-\beta}|\bar\nabla b^*|^2\,dx\,dy ,
\]
where $b^*$ is the Caffarelli--Silvestre extension of $b$. Then $E<\infty$ and there exists $C=C(\beta)>0$ such that, for every $R>1$,
\[
|H_R|\le  C\,M\,\bigl(E+\|b\|_{L^q(\R^3)}E^{1/2}\bigr)\,R^{\beta-2},\qquad q=\frac{6}{3-\beta}.
\]
In particular, $H_R\to0$ as $R\to\infty$.
\end{lemma}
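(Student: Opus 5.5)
The plan is to write $H_R$ as a boundary trace of a compactly supported function on $\mathbb R^4_+$ and convert it into a bulk integral of $y$-derivatives. Every term can then be bounded using $|b^*|\le M$, the weighted energy $E$, and the sign $\nu=1-\beta\le0$.

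\emph{Setup.} First, $E<\infty$ follows from the energy equality in Theorem~\ref{thm:CS}. Since $b^*(\cdot,y)=P^{(\beta)}_y*b$ and $\|P^{(\beta)}_y\|_{L^1}=1$, we also get $\|b^*\|_{L^\infty(\mathbb R^4_+)}\le M$ and $\|b^*(\cdot,y)\|_{L^q}\le\|b\|_{L^q}$ for every $y>0$.

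\emph{Bulk representation.} Write $\curl b^*$ for the curl in the $x$-variables. Set
\[
G(x,y):=\Bigl[(\curl b^*\cdot\nabla\psi_R)|b^*|^2-(\curl b^*\cdot b^*)(b^*\cdot\nabla\psi_R)\Bigr]\chi_R(y),
\]
which is supported in $B_{2R}\times[0,2R]$ and satisfies $\int G(x,0)\,dx=H_R$. Then
\[
H_R=-\int_{\mathbb R^3}\int_0^{2R}\partial_yG\,dy\,dx .
\]
Differentiating, $\partial_yG$ is a sum of three kinds of terms:
\begin{itemize}[label={}]
\item terms containing $\partial_y\partial_{x_j}b^*$ multiplied by $b^*b^*\nabla\psi_R\chi_R$;
\item terms of the form $\partial_xb^*\,\partial_yb^*\,b^*\,\nabla\psi_R\,\chi_R$;
\item terms of the form $\partial_xb^*\,b^*b^*\,\nabla\psi_R\,\chi_R'$.
\end{itemize}
The mixed second derivative is the only dangerous one, since it is not controlled by $E$. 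For those terms I will integrate by parts in $x$ (no boundary terms arise because $\psi_R$ has compact support), which moves $\partial_{x_j}$ onto $b^*b^*\nabla\psi_R$. After this step every integrand is bounded by a constant times one of
\[
|\bar\nabla b^*|^2|b^*||\nabla\psi_R|\chi_R,\qquad |\bar\nabla b^*||b^*|^2|\nabla^2\psi_R|\chi_R,\qquad |\bar\nabla b^*||b^*|^2|\nabla\psi_R||\chi_R'| .
\]

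\emph{Estimates.} Call the resulting integrals $J_1$ (quadratic in $\bar\nabla b^*$) and $J_2$ (linear in $\bar\nabla b^*$).

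For $J_1$, use $|b^*|\le M$ and $|\nabla\psi_R|\lesssim R^{-1}$. Since $\nu\le0$, on $0<y<2R$ we have $1\le(2R)^{\beta-1}y^{1-\beta}$, so
\[
J_1\lesssim MR^{-1}(2R)^{\beta-1}E\lesssim ME\,R^{\beta-2}.
\]
This is exactly where $\beta\ge1$ is used.

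For $J_2$, bound $|b^*|^2\le M|b^*|$ and both $|\nabla^2\psi_R|$ and $|\nabla\psi_R||\chi_R'|$ by $R^{-2}$. Then apply Cauchy--Schwarz with weights $y^{1-\beta}$ and $y^{\beta-1}$:
\[
J_2\lesssim MR^{-2}E^{1/2}\Bigl(\int_0^{2R}\int_{B_{2R}}y^{\beta-1}|b^*|^2\Bigr)^{1/2}.
\]
Here $y^{\beta-1}\le(2R)^{\beta-1}$. By Hölder on $B_{2R}$, $\int_{B_{2R}}|b^*(\cdot,y)|^2\lesssim R^{3(1-2/q)}\|b\|_{L^q}^2=R^{\beta}\|b\|_{L^q}^2$. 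Integrating over $y\in(0,2R)$ gives the bound $R^{2\beta}\|b\|_{L^q}^2$ for the bracket. Hence $J_2\lesssim M\|b\|_{L^q}E^{1/2}R^{\beta-2}$, which together with the bound on $J_1$ gives the stated estimate. Since $\beta<2$, $H_R\to0$.

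\emph{Main obstacle.} The delicate step is justifying the bulk representation near $y=0$. We need $G(\cdot,y)\to G(\cdot,0)$ as $y\to0$ in $L^1(B_{2R})$, and $\partial_yG$ must be integrable up to the boundary, including after the integration by parts in $x$. Since $b$ is smooth and bounded, $x$-derivatives of $b^*$ are $P_y$-averages of derivatives of $b$, hence continuous up to $y=0$ and locally bounded. The term $\partial_yb^*$ may blow up like $y^{\beta-1}\cdot y^{-\beta}$-type rates only in an integrable way, and it is in any case controlled in $L^2(y^{1-\beta})$ locally. To be safe, I would first integrate over $y\in(\varepsilon,2R)$, carry out all the estimates above uniformly in $\varepsilon$, and then let $\varepsilon\to0$ using the local smoothness of $b$.
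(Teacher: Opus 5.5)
Your proposal is correct and follows essentially the same route as the paper. You truncate at $y=\varepsilon$, apply the fundamental theorem of calculus in $y$ to $G$, remove the mixed derivative $\partial_y\nabla_x b^*$ by integrating by parts in $x$, and use $\beta\ge1$ to dominate the unweighted quadratic term by $R^{\beta-1}E$. The linear terms are then handled by weighted Cauchy--Schwarz, H\"older, and $\|b^*(\cdot,y)\|_{L^q}\le\|b\|_{L^q}$.

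One small correction concerns your justification of the limit $\varepsilon\to0$. In general $\nabla_x b^*$ cannot be written as $P^{(\beta)}_y*\nabla b$: only $b$, not $\nabla b$, is assumed bounded, so that convolution need not converge absolutely. The local uniform convergence $\nabla_x b^*(\cdot,\varepsilon)\to\nabla b$ should instead be obtained as in Lemma~\ref{lemma.ext}(ii). Split $b=\eta b+(1-\eta)b$, and for the far part put the derivative on the kernel.
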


\begin{proof}[Proof of Theorem \ref{thm1}]

To begin, note that, since $u\in \dot H^{\alpha/2}(\mathbb{R}^3)$ and $b\in \dot H^{\beta/2}(\mathbb{R}^3)$, the Sobolev
embedding gives $u\in L^{p}(\mathbb{R}^3)$ and $b\in L^{q}(\mathbb{R}^3)$. Since $\alpha,\beta>0$ we have $p,q>2$, and therefore $u_iu_j\in L^{p/2}(\mathbb{R}^3)$ and $b_ib_j\in L^{q/2}(\mathbb{R}^3)$ with $1<\frac p2,\frac q2<+\infty$. Then, taking the divergence of the momentum equation and using
$\nabla\cdot u=\nabla\cdot b=0$, we obtain the elliptic equation
\begin{equation}\label{ellipeq}
-\Delta \Pi=\partial^i\partial^j(u_iu_j-b_ib_j)=:f .
\end{equation}
Note that the Hall term does not appear in this equation, because it only enters the induction equation.
Now, since \eqref{ellipeq} determines $\Pi$ only up to an additive harmonic function, we fix the
pressure through the Calder\'on--Zygmund representation
\begin{equation}\label{eq:pressure}
\Pi:=\Pi_1+\Pi_2, \qquad \Pi_1:=R^iR^j(u_iu_j), \qquad \Pi_2:=-R^iR^j(b_ib_j),
\end{equation}
where $R^i$ denotes the $i$-th Riesz transform on $\mathbb{R}^3$. By the Calder\'on--Zygmund
theorem,
\begin{equation}\label{eq:CZ}
\|\Pi_1\|_{L^{\frac p2}(\mathbb{R}^3)}\le  C\,\|u\|_{L^{p}(\mathbb{R}^3)}^2<+\infty,
\qquad
\|\Pi_2\|_{L^{\frac q2}(\mathbb{R}^3)}\le  C\,\|b\|_{L^{q}(\mathbb{R}^3)}^2<+\infty .
\end{equation}
We claim that this is the only solution   of \eqref{ellipeq} in $L^{\frac p2}(\mathbb{R}^3)+L^{\frac q2}(\mathbb{R}^3)$. Indeed, if
$\tilde \Pi$ is another solution in this space, then $h:=\Pi-\tilde \Pi=h_1+h_2$ is harmonic on
$\mathbb{R}^3$, with $h_1\in L^{\frac p2}(\mathbb{R}^3)$ and $h_2\in L^{\frac q2}(\mathbb{R}^3)$. Thus, considering the mean value property and H\"older's
inequality, for every $x\in\mathbb{R}^3$ and $\rho>0$,
\[
|h(x)|\le  \frac{1}{|B_\rho|}\int_{B_\rho(x)}(|h_1|+|h_2|)\,dy
\le  |B_\rho|^{-\frac{2}{p}}\,\|h_1\|_{L^{\frac p2}(\mathbb{R}^3)}
+|B_\rho|^{-\frac{2}{q}}\,\|h_2\|_{L^{\frac q2}(\mathbb{R}^3)}\xrightarrow[\rho\to\infty]{}0 ,
\]
thus, we get $h\equiv 0$. Throughout the paper, $\Pi$ always denotes this normalized pressure.
On the other hand,  since  $u$ and $b$ are smooth, by using elliptic regularity theory on \eqref{ellipeq}, we conclude that the pressure term $\Pi$ is also smooth.

To continue,  we test the first equation of  \eqref{SHMHD}  with $\psi_R u $ and the second one with $\psi_R b$. We denote
\[
D_u:=\int_{\mathbb{R}^3} \psi_R u \cdot (-\Delta)^{\frac \alpha 2}  u \, dx,
\qquad
D_b:=\int_{\mathbb{R}^3} \psi_R b \cdot (-\Delta)^{\frac \beta 2}  b \, dx ,
\]
and we compute each of the remaining terms, using $\nabla\cdot u=\nabla\cdot b=0$, integrating by parts (all the integrands have compact support, since $\psi_R\in C_c^\infty$) and   Lemma \ref{lemma.Hall.id} we obtain
\begin{equation}\label{eq:energy.Hall}
D_u+D_b+H_R=\int_{\mathbb{R}^3} \left(\frac {|u|^2} 2 +\frac {|b|^2} 2 + \Pi \right)u \cdot \nabla \psi_R \, dx
-\int_{\mathbb{R}^3} (u\cdot b)\,(b \cdot \nabla \psi_R) \, dx.
\end{equation}
 
  We next treat the fractional dissipation terms by means of the Caffarelli--Silvestre extension. For this purpose, we consider $\Phi(x,y):=\psi_R(x)\chi_R(y)$ and the identity
(see \cite[Eq. (3.2)]{yang22}), applied to each component of $u$ with $\sigma=\alpha$ and to each component of $b$ with $\sigma=\beta$,
\begin{align}
D_u
& =
 C_\alpha \int_{\mathbb{R}^4_+} y^\mu |\overline\nabla u^*|^2 \Phi \, dx \, dy
 -
  C_\alpha \int_{\mathbb{R}^4_+} \frac{|u^*|^2}{2}
   \overline
 {\text{div}}
  \bigl( y^\mu \overline\nabla \Phi \bigr) \, dx \, dy
  ,\label{eq25-u}\\
D_b
& =
 C_\beta \int_{\mathbb{R}^4_+} y^\nu |\overline\nabla b^*|^2 \Phi \, dx \, dy
 -
  C_\beta \int_{\mathbb{R}^4_+} \frac{|b^*|^2}{2}
   \overline
 {\text{div}}
  \bigl( y^\nu \overline\nabla \Phi \bigr) \, dx \, dy
  .\label{eq25-b}
\end{align}
Then plugging \eqref{eq25-u} and \eqref{eq25-b} into \eqref{eq:energy.Hall}, we write
\begin{multline}\label{eq3-energy}
 C_\alpha\int_{\mathbb{R}^4_+} y^\mu |\overline \nabla u^*|^2 \Phi \, dx \, dy
 +
 C_\beta\int_{\mathbb{R}^4_+} y^\nu |\overline \nabla b^*|^2 \Phi \, dx \, dy
\\=
  \frac{C_\alpha}{2} \int_{\mathbb{R}^4_+}  |u^*|^2   \overline
 {\text{div}} \bigl( y^\mu  \overline\nabla \Phi \bigr) \, dx \, dy
 +
  \frac{C_\beta}{2} \int_{\mathbb{R}^4_+}  |b^*|^2   \overline
 {\text{div}} \bigl( y^\nu  \overline\nabla \Phi \bigr) \, dx \, dy
\\
+
 \int_{\mathbb{R}^3} \left(\frac {|u|^2} 2 +\frac {|b|^2} 2 + \Pi \right)u \cdot \nabla \psi_R \, dx
-\int_{\mathbb{R}^3} (u\cdot b)\,(b \cdot \nabla \psi_R) \, dx
-H_R.
\end{multline}

Now, considering the supports of $\chi_R,\ \psi_R$ and the facts that $\overline\nabla \Phi(x,y) = ( \chi_R \nabla\psi_R, \psi_R \chi'_R)$ and, for a generic weight $y^{\kappa}$, we can write 
\[ \overline
 {\text{div}} ( y^\kappa \overline\nabla \Phi ) = y^\kappa \chi_R\Delta \psi_R + \psi_R\bigl(\kappa y^{\kappa-1}\chi_R'+y^\kappa\chi_R''\bigr),\] 
 we notice that the first term is supported in $\mathcal C_R\times[0,2R]$, while the second one is supported in $B_{2R}\times[R,2R]$, where $y^{\kappa-1}\le  R^{-1}y^{\kappa}$. We deduce the estimate
 
\[
\Bigl|\int_{\mathbb{R}^4_+}  |v^*|^2  \overline
 {\text{div}} \bigl( y^\kappa \overline\nabla \Phi(x,y)  \bigr) dx  dy\Bigr|
\leq
\frac C {R^2}
 \int_{0}^{2R}   \!  \!   \!
 \int_{ \mathcal C_R} y^\kappa |v^*|^2 dxdy
 +
 \frac C {R^2}
 \int_{R}^{2R} \!  \!  \!     \int_{ B_{2R}} y^\kappa |v^*|^2 dx dy ,
\]
where $C$ depends on $\kappa$ but not on $R$. Take $(v,\kappa)=(u,\mu)$ and $(v,\kappa)=(b,\nu)$. Then, applying Lemma \ref{lemma.with.E} (with $\sigma=\alpha$, respectively $\sigma=\beta$) with $E_R=\mathcal C_R$, we conclude that the first term on the right-hand side tends to zero as $R\to\infty$. On the other hand, since $y^\kappa|v^*|^2 \geq 0$, we can write
\[
 \int_{R}^{2R} \!  \!  \!     \int_{ B_{2R}} y^\kappa |v^*|^2 dx dy
\le
 \int_{0}^{2R} \!  \!  \!     \int_{ B_{2R}} y^\kappa |v^*|^2 dx dy,
 \]
 therefore, considering Lemma \ref{lemma.with.E} with $E_R=B_{2R}$ we obtain that the second term tends to zero also. Considering the same approach for $b^*$, we conclude
 \begin{equation}
 \lim_{R\to\infty}
 \int_{\mathbb{R}^4_+}  |u^*|^2 \overline{\text{div}} \bigl( y^\mu \overline\nabla \Phi(x,y)  \bigr) dx  dy
 =
 \lim_{R\to\infty}
 \int_{\mathbb{R}^4_+}  |b^*|^2 \overline{\text{div}} \bigl( y^\nu \overline\nabla \Phi(x,y)  \bigr) dx  dy
 =0.
 \end{equation}

Now, since $\psi_R$ is radial and $\nabla \psi_R$ is supported in $\mathcal C_R$, we can write
\begin{multline}
\Bigl|\int_{\mathbb{R}^3} \left(\frac {|u|^2} 2 +\frac {|b|^2} 2 + \Pi \right)u \cdot \nabla \psi_R \, dx
-\int_{\mathbb{R}^3} (u\cdot b)\,(b \cdot \nabla \psi_R) \, dx\Bigr|
\\ \leq
C
\int_{ \mathcal C_R }
\bigl(|\vu|^2+|b|^2\bigr)
| \vu |
| \nabla   \psi_R|
 dx
+
C
 \int_{ \mathcal C_R }
 | \Pi  |
 | \vu |
|  \nabla \psi_R |
  dx
  ,
\end{multline}
where we used $|(u\cdot b)(b\cdot\nabla\psi_R)|\le|u||b|^2|\nabla\psi_R|$. Since $(\alpha,\beta)\in\mathcal A_H\subset\mathcal A$, Lemma \ref{lemma I R} shows that the right-hand side tends to zero as $R\to\infty$.

Since $u$ and $b$ are smooth and $b(x)\to0$ as $|x|\to\infty$, the field $b$ is bounded, and $b\in\dot H^{\beta/2}(\R^3)$ with $1\le\beta<2$ because $(\alpha,\beta)\in\mathcal A_H$. Hence, Lemma \ref{lemma.Hall.decay} gives $H_R\to0$ as $R\to\infty$. This is the only place where the Hall term, the decay of $b$, and the condition $\beta\ge1$ are used.

 Thus, provided with these results,  we take the limit as $R\to \infty$ in \eqref{eq3-energy}. Since $\psi$ is radial and non-increasing, and $\chi$ is non-increasing, $\Phi=\Phi_R$ increases pointwise to $1$ as $R\to\infty$, and the monotone convergence theorem applied to the left-hand side of \eqref{eq3-energy} yields
\[
C_\alpha\int_{\mathbb{R}^4_+} y^\mu |\overline{\nabla} u^*|^2 \, dx \, dy
+
C_\beta\int_{\mathbb{R}^4_+} y^\nu |\overline{\nabla} b^*|^2 \, dx \, dy = 0.
\]
Thus, since $C_\alpha,C_\beta>0$, the both integrals above vanish. 
Hence, we conclude that  $u^*$ and $b^*$ are constant in $\mathbb R^4_+$, and so are their traces $u(x)=u^*(x,0)$ and $b(x)=b^*(x,0)$. 
Then, considering that $u\in L^{p}(\mathbb R^3)$ and $b\in L^{q}(\mathbb R^3)$, and the fact that the only constant function belonging to
 $ L^{r}(\mathbb R^3) $, $r<\infty$, is the zero function, we conclude that  $u\equiv 0$ and $b\equiv 0$. This finishes the proof.
\end{proof}
 
\begin{proof}[Proof of Lemma \ref{lemma I R}]
Set
\[
p=\frac{6}{3-\alpha},
\qquad
q=\frac{6}{3-\beta}.
\]
The assumptions $u\in\dot H^{\alpha/2}(\R^3)$ and
$b\in\dot H^{\beta/2}(\R^3)$, together with the Sobolev embeddings, yield
\[
u\in L^p(\R^3),\qquad b\in L^q(\R^3).
\]
We shall repeatedly use the following estimate. Suppose that
$\rho_1,\rho_2,\rho_3\in[1,\infty]$ satisfy
\[
\vartheta=1-\frac1{\rho_1}-\frac1{\rho_2}-\frac1{\rho_3}
\in[0,1).
\]
H\"older's inequality, together with the bounds
$\|\nabla\psi_R\|_{L^\infty}\le  CR^{-1}$ and
$|\mathcal C_R|\le  CR^3$, gives
\begin{align}\label{eq:scaling}
\int_{\mathcal C_R}|f_1||f_2||f_3||\nabla\psi_R|\,dx
&\le
\prod_{j=1}^3\|f_j\|_{L^{\rho_j}(\mathcal C_R)}
\|\nabla\psi_R\|_{L^{1/\vartheta}(\mathcal C_R)}
\nonumber\\
&\le
CR^{3\vartheta-1}
\prod_{j=1}^3\|f_j\|_{L^{\rho_j}(\mathcal C_R)}.
\end{align}
In the case when $\vartheta=0$, the norm of $\nabla\psi_R$ is interpreted as its
$L^\infty$ norm. 
In particular, whenever $3\vartheta-1\le  0 $, the
factor $R^{3\vartheta-1}$ 
is bounded independently of $R>1$.
Now, for convenience, we write
\[
\begin{aligned}
I_1(R)
&=\int_{\mathcal C_R}
|u|^3| \nabla\psi_R|\,dx, 
\\
I_2(R)&
=\int_{\mathcal C_R}|b|^2|u||\nabla\psi_R|\,dx,\\
I_3(R)&
= 
\int_{\mathcal C_R}|\Pi||u||\nabla\psi_R|\,dx.
\end{aligned}
\]
Thus, it suffices to show that each of these quantities tends to zero as
$R\to\infty$.

For $I_1(R)$, choose $\rho_1=\rho_2=\rho_3=p$ in
\eqref{eq:scaling}. The, the corresponding exponent is
\[
\vartheta_1=1-\frac3p=\frac{\alpha-1}{2},
\qquad
3\vartheta_1-1=\frac{3\alpha-5}{2}\le0.
\]
Consequently, we can write 
\[
I_1(R)\le  C\|u\|_{L^p(\mathcal C_R)}^3.
\]
Since $u \in L^p(\R^3)$ and  the 
 annuli $\mathcal C_R$ escape to infinity, 
we get 
\[
\lim_{R\to\infty}\|u\|_{L^p(\mathcal C_R)}=0.
\]
Then, it follows that $ I_1(R) \to0 $.

Next, applying \eqref{eq:scaling} with
$(\rho_1, \rho_2,\rho_3) = (p ,q,q )$, we find that 
\[
\vartheta_2
=1-\frac1p-\frac2q
=\frac{\alpha+2\beta-3}{6}.
\]
Note that, the parameter restrictions $3\le\alpha+2\beta\le5$ ensure that
$\vartheta_2\ge0$ and
\[
3\vartheta_2-1=\frac{\alpha+2\beta-5}{2}\le0.
\]
Therefore, we can write 
\[
I_2(R)\le
C\|u\|_{L^p(\mathcal C_R)}
\|b\|_{L^q(\mathcal C_R)}^2
\le
C\|u\|_{L^p(\mathcal C_R)}\|b\|_{L^q(\R^3)}^2.
\]
The first factor tends to zero, while the second is finite. Hence
$I_2(R)\to0$ as $R\to\infty$.

It remains to deal with the pressure contribution. Decompose
$\Pi=\Pi_1+\Pi_2$ as in \eqref{eq:pressure}. For the first component,
the Calder\'on--Zygmund estimate \eqref{eq:CZ} gives
\[
\|\Pi_1\|_{L^{p/2}(\R^3)}
\le  C\|u\|_{L^p(\R^3)}^2.
\]
Using \eqref{eq:scaling} with
$(\rho_1,\rho_2,\rho_3)=(p/2,p,\infty)$ and $f_3\equiv1$, we obtain
\[
\begin{aligned}
\int_{\mathcal C_R}|\Pi_1||u||\nabla\psi_R|\,dx
&\le
CR^{3\vartheta_1-1}
\|\Pi_1\|_{L^{p/2}(\mathcal C_R)}
\|u\|_{L^p(\mathcal C_R)}\\
&\le
C\|u\|_{L^p(\R^3)}^2
\|u\|_{L^p(\mathcal C_R)}.
\end{aligned}
\]
Here we used $3\vartheta_1-1\le0$ and
$\|1\|_{L^\infty(\mathcal C_R)}=1$. The right-hand side vanishes as
$R\to\infty$.

Similarly, for the magnetic contribution, \eqref{eq:CZ} yields
\[
\|\Pi_2\|_{L^{q/2}(\R^3)}
\le  C\|b\|_{L^q(\R^3)}^2.
\]
Thus, choosing $(\rho_1,\rho_2,\rho_3)=(q/2,p,\infty)$ in
\eqref{eq:scaling}, with the first factor representing $\Pi_2$, gives
\[
\begin{aligned}
\int_{\mathcal C_R}|\Pi_2||u||\nabla\psi_R|\,dx
&\le
CR^{3\vartheta_2-1}
\|\Pi_2\|_{L^{q/2}(\mathcal C_R)}
\|u\|_{L^p(\mathcal C_R)}\\
&\le
C\|b\|_{L^q(\R^3)}^2
\|u\|_{L^p(\mathcal C_R)}
\longrightarrow0
\end{aligned}
\]
as $R\to\infty$. Thus both pressure contributions vanish, and
therefore $I_3(R)\to0$.

Combining the three limits, we conclude that
\[
\lim_{R\to\infty}I_1(R)
=
\lim_{R\to\infty}I_2(R)
=
\lim_{R\to\infty}I_3(R)=0,
\]
which proves the lemma.
\end{proof}

\begin{proof}[Proof of Lemma \ref{lemma.Hall.id}]
Let $F:=(\curl b)\times b$. Since $b$ is smooth, $F$ is a smooth vector field, and $\psi_Rb$ is smooth with support in $\overline{B_{2R}}$, so every integral below is taken over a compact set and is finite. Thus, the left-hand side of the lemma reads  $\int_{\R^3}\curl F\cdot\psi_Rb\,dx$. 

Recall that, for a smooth vector field $F$ and a smooth compactly supported vector field $G$, we have $\operatorname{div}(F\times G)=G\cdot\curl F-F\cdot\curl G$.
Since $F\times G$ is smooth with compact support, the divergence theorem on a ball $B_\rho\supset\operatorname{supp}G$ shows that the integral of its divergence vanishes, and then, we can write 
\[
\int_{\R^3}\curl F\cdot G\,dx=\int_{\R^3}F\cdot\curl G\,dx .
\]
We apply this with $G=\psi_Rb$. By the product rule for the curl, $\curl(\psi_Rb)=\psi_R\curl b+\nabla\psi_R\times b$, and we obtain
\begin{equation}\label{eq:forhr}
\int_{\R^3}\curl F\cdot\psi_Rb\,dx=\int_{\R^3}\psi_R\,\bigl((\curl b)\times b\bigr)\cdot\curl b\,dx+\int_{\R^3}\bigl((\curl b)\times b\bigr)\cdot(\nabla\psi_R\times b)\,dx .
\end{equation}
The first integral vanishes, because the cross product $(\curl b)\times b$ is orthogonal to each of its factors, and in particular to $\curl b$. 

To compute the second integral we use the identity
\[
(A\times B)\cdot(C\times D)=(A\cdot C)(B\cdot D)-(A\cdot D)(B\cdot C),\qquad A,B,C,D\in\R^3 
.
\]
Choosing $A=\curl b$, $B=D=b$ and $C=\nabla\psi_R$, so that $B\cdot D=|b|^2$ and $B\cdot C=b\cdot\nabla\psi_R$, we get
\begin{multline*}
\int_{\R^3}
\bigl((\curl b)\times b\bigr)\cdot(\nabla\psi_R\times b) \ dx 
\\ =
\int_{\R^3} (\curl b\cdot\nabla\psi_R)\,|b|^2-(\curl b\cdot b)\,(b\cdot\nabla\psi_R)
\ dx 
=H_R
.
\end{multline*}
Thus, gathering together this last expression with \eqref{eq:forhr}, we obtain the desired identity.
\end{proof}
 
\subsection{Proof of lemma \ref{lemma.Hall.decay}}

Before proceeding  with the  proof of the decay of the Hall term,   we establish a preliminary lemma concerning the boundary behavior of the Caffarelli–Silvestre  extension. 
Indeed, since the Hall term involves the gradient of $b$ on the boundary $\{y=0\}$, it is essential to specify in what sense $b^* $ and  its tangential gradient $\nabla_x b^*$  attain their boundary values. 
 The following lemma provides the required convergence properties.

\begin{lemma}\label{lemma.ext}
Let $\sigma\in(0,2)$ and let $v\in C^\infty(\mathbb R^3) \cap  L^\infty(\mathbb R^3)$. Set $v^*(x,y):=(P_y^{(\sigma)}*v)(x)$ for $y>0$.
 Then,
\begin{enumerate}
\item[(i)] $v^*\in C^\infty(\mathbb R^3\times(0,\infty))$ and $\|v^*(\cdot,y)\|_{L^\infty}\le\|v\|_{L^\infty}$ for every $y>0$. Moreover, if $v\in L^s(\mathbb R^3)$ for some $s\in[1,\infty]$, then $\|v^*(\cdot,y)\|_{L^s}\le\|v\|_{L^s}$ for every $y>0$.
\item[(ii)] For every compact set $K\subset\mathbb R^3$,
\[
\sup_{x\in K}\bigl(|v^*(x,y)-v(x)|+|\nabla_xv^*(x,y)-\nabla v(x)|\bigr)\xrightarrow[y\to0]{}0 .
\]
\end{enumerate}
\end{lemma}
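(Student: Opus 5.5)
The plan is to work directly with the convolution formula $v^*(\cdot,y)=P_y^{(\sigma)}*v$ and use two properties of the kernel: it is an approximate identity, and it has explicit scaling. Write $P_y(x)=y^{-3}P_1(x/y)$, where $P_1(z)=c_{3,\sigma}(1+|z|^2)^{-(3+\sigma)/2}$. Then $P_1\ge0$, $\|P_1\|_{L^1}=1$, and $P_1$ has a tail decaying like $|z|^{-3-\sigma}$, which is integrable.

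For (i), smoothness comes from differentiating under the integral sign. Fix $y_0>0$ and a compact neighbourhood of a point $(x_0,y_0)$. Every derivative $\partial_x^a\partial_y^k P_y(x-z)$ is bounded there by $C(1+|z|)^{-3-\sigma}$, locally uniformly, because each derivative of $y^\sigma(|x|^2+y^2)^{-(3+\sigma)/2}$ keeps or improves the decay in $|x|$ and is smooth for $y$ bounded away from $0$. Since $v\in L^\infty$, dominated convergence gives $v^*\in C^\infty(\mathbb R^3\times(0,\infty))$. The norm bounds follow from Young's inequality, $\|P_y*v\|_{L^s}\le\|P_y\|_{L^1}\|v\|_{L^s}=\|v\|_{L^s}$, which holds for every $s\in[1,\infty]$; the case $s=\infty$ gives the $L^\infty$ bound.

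For (ii), we first need $\nabla_x v^*=P_y*\nabla v$. This requires care because $\nabla v$ is only known to be smooth, not bounded. I would prove the identity locally. Fix a compact $K$ and a cutoff $\eta\in C_c^\infty$ with $\eta=1$ on the set $K_2:=\{x:\operatorname{dist}(x,K)\le 2\}$. Split $v=\eta v+(1-\eta)v$.
\begin{itemize}
\item For the local piece, $\eta v\in C_c^\infty$, so $\nabla(P_y*(\eta v))=P_y*\nabla(\eta v)$ holds by the usual rule.
\item For the far piece, $x\in K$ and $(1-\eta)v$ vanishes on $K_2$, so we may write $\nabla_x\bigl(P_y*((1-\eta)v)\bigr)(x)=\int_{|x-z|\ge2}\nabla P_y(x-z)(1-\eta(z))v(z)\,dz$.
\end{itemize}
The far piece is bounded by $\|v\|_\infty\int_{|w|\ge2}|\nabla P_y(w)|\,dw\le C\|v\|_\infty\, y^\sigma\to0$ as $y\to0$, since $|\nabla P_y(w)|\le Cy^\sigma|w|^{-4-\sigma}$. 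The same splitting controls $v^*-v$, with far contribution $\|v\|_\infty\int_{|w|\ge2}P_y(w)\,dw\le C\|v\|_\infty\,y^\sigma$.

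It then remains to show $P_y*g\to g$ uniformly on $K$ for $g=\eta v$ and for $g=\nabla(\eta v)$. Since $\eta v$ is identically equal to $v$ near $K$, these $g$ agree with $v$, respectively $\nabla v$, on $K$. Both are in $C_c$, hence uniformly continuous and bounded. Write
\[
P_y*g(x)-g(x)=\int P_1(z)\bigl(g(x-yz)-g(x)\bigr)\,dz,
\]
and split the integral at $|z|=\delta/y$, or equivalently at a fixed radius $r$ in the variable $z$.
\begin{itemize}
\item On the near part $|z|\le r$, uniform continuity of $g$ makes the contribution small as $y\to0$.
\item On the far part $|z|>r$, the contribution is at most $2\|g\|_\infty\int_{|z|>r}P_1$, which is small once $r$ is taken large.
\end{itemize}
Combining the local and far estimates gives the uniform convergence of $v^*$ and $\nabla_x v^*$ on $K$, which is (ii).

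The main obstacle is exactly the gap between "$v$ is bounded" and "$\nabla v$ may be unbounded." This is why I avoid global identities and use the localization with $\eta$, where the heavy tail of the fractional Poisson kernel is handled by its $y^\sigma$ decay away from the diagonal.
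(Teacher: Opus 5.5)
Your proposal is correct and follows essentially the same route as the paper: for (i), differentiation under the integral sign with a locally uniform $(1+|z|)^{-3-\sigma}$ majorant plus Young's inequality; for (ii), the same splitting $v=\eta v+(1-\eta)v$, with the approximate-identity property handling the compactly supported piece and the off-diagonal bound $|\nabla P_y^{(\sigma)}(w)|\le Cy^\sigma|w|^{-4-\sigma}$ handling the far piece. The only differences are cosmetic: you use a fixed separation distance $2$ instead of a general $d$, and you bound the far part of $v^*-v$ by $Cy^\sigma$ rather than by the tail $\int_{|w|\ge d/y}P_1^{(\sigma)}$.
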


\begin{proof}
We first observe that the Poisson kernel satisfies the scaling
identity
\[
P_y^{(\sigma)}(x)=y^{-3}P_1^{(\sigma)}(x/y).
\]
In particular, $\{P_y^{(\sigma)}\}_{y>0}$ is an approximate
identity: each kernel is nonnegative, has integral one, and
\[
\lim_{y\downarrow0}
\int_{|x|>\delta}P_y^{(\sigma)}(x)\,dx=0
\qquad\text{for every }\delta>0.
\]

To prove (i), we note first that, for every compact interval
$J\subset(0,\infty)$ and every multi-index $\gamma$ in the
variables $(x,y)$, the explicit formula  for the kernel  gives
\[
\sup_{y\in J}
\bigl\|D^\gamma P_y^{(\sigma)}\bigr\|_{L^1(\mathbb R^3)}
<\infty.
\]
In particular, for every compact interval $J\subset(0,\infty)$ and every multi-index $ \gamma$ 
in the variables $(x,y)$ there is $C>0$ such that  
$ |D^\gamma P_y^{(\sigma)}(x)|\le  C(1+|x|)^{-3-\sigma }$ 
for all $x\in\mathbb R^3$ and $y\in J$. Since  $(1+|x-z|)^{-3-\sigma}
\le
 2^{3+\sigma}(1+|x_0-z|)^{-3-\sigma} $ whenever $|x-x_0|\le1$, and the right-hand side is integrable in $z$, the integrand  $D^\gamma[P_y^{(\sigma)}(x-z)] \ v(z) $ is dominated, for $ ( x , y) $ near any given point, by an integrable function independent of $ (x, y)$. 
 Thus, since $v $ is bounded,   differentiation  under  the integral
sign is  therefore justified by dominated convergence.
Then, iterating this argument yields
\[
D^\gamma v^*( x, y)
=
\int_{\mathbb R^3}
 D^\gamma\! 
 \left[P_y^{(\sigma)}(x-z)\right]v(z)\,
dz
\]
for every such multi-index $ \gamma$. 
Consequently, we can write
\[
v^*\in C^\infty(\mathbb R^3\times(0,\infty)).
\]
Furthermore, note that the positivity and normalization of the kernel imply
\[
|v^*(x,y)|
\leq
\int_{\mathbb R^3}P_y^{(\sigma)}(x-z)|v(z)|\,dz
\leq \|v\|_{L^\infty(\mathbb R^3)}.
\]
Thus, for $ 1 \leq s <\infty $, considering  Young's convolution inequality we get 
\[
\|v^*(\cdot,y)\|_{L^s}
\leq \|P_y^{(\sigma)}\|_{L^1}\|v\|_{L^s}
=\|v\|_{L^s}.
\]
Then, gathering together with the preceding $L^\infty$ estimate, we prove 
(i).

We now prove (ii). To this end, we fiix a compact set $K \subset \mathbb R^3 $, 
we choose a bounded open neighborhood $U$ of $K$ and a function
$\eta\in C_c^\infty(\mathbb R^3)$ such that $\eta=1$ on $U$.
With this, we write
\[
v=f+g,\qquad f:=\eta v,\qquad g:=(1-\eta)v.
\]
Then, we have $f\in C_c^\infty(\mathbb R^3)$, while $g$ is bounded
and vanishes on $U$. In particular, there exists $d>0$ such
that
\[
\operatorname{dist}(K,\operatorname{supp}g)\geq d.
\]
Now,  since $ \{P_y^{(\sigma)}\}_{ y >0} $ is an approximate identity
and $f \in C_c^\infty (\mathbb R^3)$, we have
\[
P_y^{(\sigma)}*f\longrightarrow f
\quad\text{uniformly on }\mathbb R^3.
\]
On the other hand, considering that  differentiation  commutes with convolution for
$ f \in C_c^\infty(\mathbb R^3)$, the same argument applied
to each first derivative of $f$ yields
\[
\nabla_x(P_y^{(\sigma)}*f)
=P_y^{(\sigma)}*\nabla f
\longrightarrow \nabla f
\quad\text{uniformly on }\mathbb R^3.
\]
It remains to estimate the contribution of $g$.
Note that, for $ x \in K$, the support condition gives
$|x-z|\geq d$ whenever $g(z)\neq0$. 
Hence, we can write 
\[
\begin{aligned}
|(P_y^{(\sigma)}*g)(x)|
&\leq \|g\|_{L^\infty}
\int_{|w|\geq d}P_y^{(\sigma)}(w)\,dw\\
&=\|g\|_{L^\infty}
\int_{|w|\geq d/y}P_1^{(\sigma)}(w)\,dw
\longrightarrow0
\end{aligned}
\]
as $y\downarrow0$, uniformly for $x\in K$.
Now, note that, by part (i) applied to $g=(1-\eta)v\in C^\infty(\mathbb R^3)\cap L^\infty(\mathbb R^3)$, we have
\[
\nabla_x(P_y^{(\sigma)}*g)(x) 
=
\int_{\mathbb R^3}\nabla P_y^{(\sigma)}(x-z)\,g(z)\,dz 
,
\]
and, for the tangential gradient, direct differentiation of the
kernel gives
\[
|\nabla_xP_y^{(\sigma)}(w)|
\leq C_\sigma y^\sigma |w|^{-4-\sigma},
\qquad w\neq0.
\]
Thus, since $|x-z|\geq d$ on the support of the integrand, we obtain
\[
\begin{aligned}
|\nabla_x(P_y^{(\sigma)}*g)(x)|
&\leq C_\sigma\|g\|_{L^\infty}y^\sigma
\int_{|w|\geq d}|w|^{-4-\sigma}\,dw\\
&\leq C_{\sigma,d}\|g\|_{L^\infty}y^\sigma
\longrightarrow0
\end{aligned}
\]
as $y\downarrow0 $, again uniformly  for $ x \in K$.
Finally, note that  $f=v$ and $\nabla f=\nabla v$ on $K$.
Thus,  combining
the preceding estimates for $f$ and $g$ proves
\[
\sup_{x\in K}
\left(
|v^*(x,y)-v(x)|
+
|\nabla_xv^*(x,y)-\nabla v(x)|
\right)\longrightarrow0
\]
as $y\downarrow0$. This establishes (ii) and completes
the proof.
\end{proof}

\begin{proof}[Proof of Lemma \ref{lemma.Hall.decay}]
To begin, note that, since $b\in\dot H^{\beta/2}(\R^3)$, Theorem \ref{thm:CS} (with $\sigma=\beta$) gives 
\[
E=C_\beta^{-1}\|(-\Delta)^{\beta/4}b\|_{L^2}^2<\infty,
\]
and the Sobolev embedding gives $b\in L^q(\R^3)$. We recall that $\nu=1-\beta\in(-1,0]$. We fix $R>1$ and we write $\psi=\psi_R$ and $\chi=\chi_R$. 
By Lemma \ref{lemma.ext}(i), we have $|b^*|\le  M$ in $\R^3\times(0,\infty)$, and $\|b^*(\cdot,y)\|_{L^q}\le\|b\|_{L^q}$. 
To continue we define
\[
W:=|b|^2\nabla\psi-(b\cdot\nabla\psi)\,b,\qquad 
 W^*:=|b^*|^2 \nabla \psi-  (b^*\cdot\nabla\psi)\,b^*,
\]
so that $H_R=\int_{\R^3}\curl b\cdot W\,dx$ by Lemma \ref{lemma.Hall.id}.

\smallskip
For $(x,y)\in\R^3\times(0,\infty)$ we also set
\[
g(x,y):=\chi_R(y)\,\curl_xb^*(x,y)\cdot W^*(x,y).
\]
Then, by Lemma \ref{lemma.ext}(i), $g$ is smooth, it vanishes for $|x|\ge2R$ (because $\nabla\psi$ is supported in $\mathcal C_R$), and for $y\ge2R$.
Now,  we fix $\varepsilon\in(0,1)$. Thus, by the fundamental theorem of calculus in the variable $y$ and Fubini's theorem on the compact set $\overline{B_{2R}}\times[\varepsilon,2R]$, we have
\begin{equation}\label{eq:Hall.FTC}
\int_{\R^3}g(x,\varepsilon)\,dx=-\int_\varepsilon^{2R}\int_{\R^3}\p_yg(x,y)\,dx\,dy .
\end{equation}
Thus,  by Lemma \ref{lemma.ext}(ii)  applied on the compact set $\overline{\mathcal C_R} $, 
we have $b^*(\cdot,\varepsilon) \to b$ and $\nabla_xb^*(\cdot,\varepsilon)\to\nabla b$ uniformly on 
$\overline{\mathcal C_R}$. 
Moreover, $\chi_R(\varepsilon)=1$ because $\varepsilon<R $. 
Hence, we have that  $g(\cdot,\varepsilon)\to\curl b\cdot W$ uniformly on $\overline{\mathcal C_R}$, and since all functions vanish outside $\mathcal C_R$, we can write 
\[
H_R=\lim_{\varepsilon\to0}\int_{\R^3}g(x,\varepsilon)\,dx .
\]
Thus, it is enough to bound the right-hand side of \eqref{eq:Hall.FTC} uniformly in $\varepsilon$.

\smallskip
We have
\[
\p_yg=\chi_R'\,\curl_xb^*\cdot W^*+\chi_R\,\curl_x\p_yb^*\cdot W^*+\chi_R\,\curl_xb^*\cdot\p_yW^* .
\]
For fixed $y>0$, the  vector field $ W^*(\cdot, y)$ is
 smooth with compact support,  so we can 
  integrate by parts in the second term, to write 
\[
\int_{\R^3}\chi_R\,\curl_x\p_yb^*\cdot W^*\,dx=\int_{\R^3}\chi_R\,\p_yb^*\cdot\curl_xW^*\,dx .
\]
Next, using $\curl(fG)=\nabla f\times G+f\curl G$ and $\curl\nabla\psi=0$, we have
\[
\curl_xW^*=\nabla_x|b^*|^2\times\nabla\psi
-
\nabla_x(b^*\cdot\nabla\psi)\times 
b^*-(b^*\cdot\nabla\psi)\curl_xb^*
,
\]
with
 $\nabla_x(b^*\cdot\nabla\psi)=(\nabla_xb^*)^T
 \nabla\psi + (\nabla^2\psi)b^*$. 
 Hence, considering $\p_yW^*=2(b^*\cdot\p_yb^*)\nabla\psi-(\p_yb^*\cdot\nabla\psi)b^*- ( b^* \cdot\nabla\psi )\p_y 
 b^*$,  we obtain the pointwise bounds
\begin{align}
|W^*|&\le2|b^*|^2|\nabla\psi|,\label{eq:W1}\\
|\curl_xW^*|&\le  C\bigl(|\nabla_xb^*|\,|b^*|\,|\nabla\psi|+|b^*|^2|\nabla^2\psi|\bigr),\label{eq:W2}\\
|\p_yW^*|&\le  C\,|b^*|\,|\p_yb^*|\,|\nabla\psi|,\label{eq:W3}
\end{align}
Now, by using $|\curl_xb^*|\le2|\nabla_xb^*|$, $|\nabla_xb^*|+|\p_yb^*|\le2|\bar\nabla b^*|$, and $|\chi_R'|\le  C/R$ with $\chi_R'$ supported in $[R,2R]$, we deduce from \eqref{eq:Hall.FTC} that
\begin{equation}\label{eq:Hall.J}
\Bigl|\int_{\R^3}g(x,\varepsilon)\,dx\Bigr|\le  C\,(J_1+J_2+J_3),
\end{equation}
where
\[
J_1:=\frac1{R^2}\int_R^{2R}\!\!\int_{\mathcal C_R}|\nabla_xb^*|\,|b^*|^2dx\,dy,\qquad
J_2:=\frac1R\int_0^{2R}\!\!\int_{\mathcal C_R}|\bar\nabla b^*|^2\,|b^*|\,dx\,dy,
\]
\[
J_3:=\frac1{R^2}\int_0^{2R}\!\!\int_{\mathcal C_R}|\p_yb^*|\,|b^*|^2dx\,dy .
\]
Note
Note that the bound \eqref{eq:Hall.J} does not depend on $\varepsilon$, since the integrals over $(\varepsilon,2R)$ have been enlarged to integrals over $(0,2R)$.

\smallskip
By H\"older's inequality in $x$ with exponents $\frac q2$ and $\frac q{q-2}$, by Lemma \ref{lemma.ext}(i), and since $1-\frac2q=\frac\beta3$, we have for every $y>0$
\begin{equation}\label{eq:aux}
\int_{\mathcal C_R}|b^*(x,y)|^2dx\le\|b^*(\cdot,y)\|_{L^q}^2\,|\mathcal C_R|^{1-\frac2q}\le  C\|b\|_{L^q}^2R^{\beta}.
\end{equation}

\smallskip
Here we use $\beta\ge1$. Indeed, since $\nu=1-\beta\le0$, the function $y\mapsto y^{-\nu}=y^{\beta-1}$ is non-decreasing, so that $y^{-\nu}\le(2R)^{\beta-1}$ for $0<y\le2R$. Therefore
\[
|\bar\nabla b^*|^2=y^{-\nu}\,y^{\nu}|\bar\nabla b^*|^2\le(2R)^{\beta-1}\,y^\nu|\bar\nabla b^*|^2\qquad\text{for }0<y\le2R,
\]
and, using $|b^*|\le  M$,
\[
J_2\le\frac{M}{R}(2R)^{\beta-1}\int_{\mathbb R^4_+}y^\nu|\bar\nabla b^*|^2dx\,dy\le  C\,M\,E\,R^{\beta-2}.
\]

\smallskip
By the Cauchy--Schwarz inequality with the splitting $|\p_yb^*|=y^{\nu/2}|\p_yb^*|\cdot y^{-\nu/2}$,
\[
J_3\le\frac1{R^2}\Bigl(\int_{\mathbb R^4_+}y^\nu|\p_yb^*|^2\Bigr)^{1/2}\Bigl(\int_0^{2R}\!\!\int_{\mathcal C_R}y^{-\nu}|b^*|^4dx\,dy\Bigr)^{1/2}.
\]
Now, note that 
\[
|\partial_y b^*|^2
\le  |\nabla_x b^*|^2+|\partial_y b^*|^2
=|\bar\nabla b^*|^2.
\]
Thus, multiplying it by  $ y^{1-\beta}>0 $  and integrating over $
\mathbb R^4_+$, we get
\[
\int_{\mathbb R^4_+}
y^{1-\beta}|\partial_y b^*|^2\,dx\,dy 
\le
\int_{\mathbb R^4_+}
y^{1-\beta}|\bar\nabla b^*|^2\,dx\,dy 
=E
.
\]
On the other hand, using $ | b^* |^4
\le
 M^2|b^*|^2 $, \eqref{eq:aux} and $-\nu=\beta-1>-1$, we have
\begin{multline*}
\int_0^{2R}\!\!\int_{\mathcal C_R}y^{-\nu}|b^*|^4dx\,dy\le  M^2\int_0^{2R}y^{\beta-1}\int_{\mathcal C_R}|b^*(x,y)|^2dx\,dy
\\ \le
 CM^2\|b\|_{L^q}^2R^\beta\int_0^{2R}y^{\beta-1}dy=CM^2\|b\|_{L^q}^2R^{2\beta}
 ,
\end{multline*}
where we used $\beta>0$ to integrate $y^{\beta-1}$ near $y=0$. Therefore, we can write 
\[
J_3\le  C\,M\,\|b\|_{L^q}\,E^{1/2}R^{\beta-2}.
\]
To estimate of $J_1$ we proceed in the same way. Indeed, by applying Cauchy--Schwarz with the same weight, we get 
\[
\begin{aligned}
J_1 \le
 \frac1{R^2}
& \left(\int_{\mathbb R^4_+}
y^{1-\beta}|\nabla_xb^*|^2\,dx\,dy\right )^{1/2}\\
&\times
\left(\int_R^{2R} \int_{ \mathcal C_R} 
y^{\beta-1}|b^*|^4\,dx\,dy\right)^{1/2}.
\end{aligned}
\]
Then, we stress the fact that the first integral in the right-hand side is bounded by 
$E$. The second one can be  estimated by extending 
 $(R,2R) $ to  $ (0, 2R)$ arguing as before. 
  Thus,  we obtain 
\[
J_1\le
 CM\|b\|_{L^q}E^{1/2}R^{\beta-2}.
\]

\smallskip
Gathering the preceding estimates and letting $\varepsilon\to0$ in \eqref{eq:Hall.FTC}--\eqref{eq:Hall.J}, we get
\[
|H_R|\le   C\,M\,\bigl(E+\|b\|_{L^q}E^{1/2}\bigr)R^{\beta-2},
\]
and $H_R\to0$ as $R\to\infty$, since $\beta<2$.
\end{proof}

 \medskip

\paragraph{\bf Datasets} Data sharing does not apply to this article as no datasets were generated or analyzed during the
current study.

\paragraph{\bf  Conflict of interest} In addition, the authors declare no conflict of interest and confirm that they contributed equally to this paper.

%------------------
%------ end of document
%------------------

 \end{document}